\numberwithin{equation}{section}
\newtheorem{thm}{Theorem}[section]
\newtheorem{theorem}[thm]{Theorem}
\newtheorem{lemma}[thm]{Lemma}
\title{Symmetry properties of finite sums involving generalized Fibonacci numbers}
\author[]{Kunle Adegoke \thanks{adegoke00@gmail.com}}
\author[]{Oluwaseyi Oshin \thanks{oluwaseyioshin@gmail.com}}
\affil{Department of Physics and Engineering Physics, \mbox{Obafemi Awolowo University}, 220005 Ile-Ife, Nigeria}
\begin{document}

\date{}

\maketitle

\begin{abstract}
\noindent We extend a result of I.~J.~Good and prove more symmetry \mbox{properties} of sums involving generalized Fibonacci numbers.
\end{abstract}

\section{Introduction}
The generalized Fibonacci numbers $G_i$, $i \ge 0$, with which we are mainly concerned in this paper, are defined through the second order recurrence relation $G_{i+1} = G_i + G_{i-1}$, where the seeds $G_0$ and $G_1$ need to be specified. As particular cases, when $G_0 = 0$ and $G_1= 1$, we have the Fibonacci numbers, denoted $F_i$, while when $G_0= 2$ and $G_1= 1$, we have the Lucas numbers, $L_i$.

\bigskip

I.~J.~Good~\cite{good} proved the symmetry property:
\begin{equation}\label{equ.good}
F_{q} \sum_{k = 1}^n {\frac{{( - 1)^{k} }}{{G_{k} G_{k + q} }}}  = F_{n} \sum_{k = 1}^q {\frac{{( - 1)^{k} }}{{G_{k} G_{k + n} }}}\,,
\end{equation}
where $q$ and $n$ are nonnegative integers, and all the  numbers $G_1$, $G_2$, \ldots, $G_{n+q}$ are nonzero.

\bigskip

The identity~\eqref{equ.good} is a particular case (corresponding to setting $p=1$) of the following result, to be proved in this present paper:
\begin{equation}\label{equ.sc8meci}
F_{pq} \sum_{k = 1}^n {\frac{{( - 1)^{pk} }}{{G_{pk} G_{pk + pq} }}}  = F_{pn} \sum_{k = 1}^q {\frac{{( - 1)^{pk} }}{{G_{pk} G_{pk + pn} }}}\,,
\end{equation} 
where $q$, $p$ and $n$ are nonnegative integers, and all the numbers $G_p$, $G_{2p}$, \ldots, $G_{pn+pq}$ are nonzero.

\bigskip

In the limit as $n$ approaches infinity, and specializing to Fibonacci numbers, the identity~\eqref{equ.sc8meci} gives
\begin{equation}\label{equ.zbajbkj}
\begin{split}
\sum\limits_{k = 1}^\infty  {\frac{{( - 1)^{pk} }}{{F_{pk} F_{pk + pq} }}}  &= \frac{1}{{F_{pq} }}\sum\limits_{k = 1}^q {\left\{ {\frac{{( - 1)^{pk} }}{{F_{pk} }}\mathop {\lim }\limits_{n \to \infty } \left( {\frac{{F_{pn} }}{{F_{pk + pn} }}} \right)} \right\}}\\
&= \frac{1}{{F_{pq} }}\sum\limits_{k = 1}^q {\frac{{( - 1)^{pk} }}{{\phi ^{pk} F_{pk} }}}\,,
\end{split}
\end{equation}
where $\phi=(1+\sqrt 5)/2$ is the golden ratio.

\bigskip

The identity~\eqref{equ.zbajbkj} generalizes Bruckman and Good's result (identity~(19) of~\cite{bruckman}, which corresponds to setting $q=1$ in~\eqref{equ.zbajbkj}).

\bigskip

In sections~\ref{sec.main1} --~\ref{sec.more} we will prove identity~\eqref{equ.sc8meci} and discover more symmetry properties of sums involving generalized Fibonacci numbers. In section~\ref{sec.horadam} we shall extend the discussion to Horadam sequences $W_i$ and $U_i$ by proving
\begin{equation}
U_{pq} \sum_{k = 1}^n {\frac{{Q^{pk} }}{{W_{pk} W_{pk + pq} }}}  = U_{pn} \sum_{k = 1}^q {\frac{{Q^{pk} }}{{W_{pk} W_{pk + pn} }}}
\end{equation}
and
\begin{equation}
U_{2pq} \sum_{k = 1}^{2n} {\frac{{(\pm Q^p)^{k} }}{{W_{pk} W_{pk + 2pq} }}}  = U_{2pn} \sum_{k = 1}^{2q} {\frac{{(\pm Q^p)^{k} }}{{W_{pk} W_{pk + 2pn} }}}\,,
\end{equation}
for integers $p$, $q$, $Q$ and $n$, thereby extending Andr\'e-Jeannin's result (Theorem~1 of~\cite{jeannin}) and further generalizing the identity~\eqref{equ.sc8meci}.
\section{Required identities}
\subsection{Telescoping summation identities}
The following telescoping summation identities are special cases of the more general identities proved in~\cite{adegoke}.
\begin{lemma}\label{FS}
If $f(k)$ is a real sequence and $u$, $v$ and $w$ are positive integers, then
\[
\sum_{k = 1}^w {\left[ {f(uk + uv) - f(uk)} \right]}  = \sum_{k = 1}^v {\left[f(uk + uw)-f(uk)\right]}\,. 
\]
\end{lemma}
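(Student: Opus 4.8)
The plan is to absorb the common scale factor $u$ by writing $g(k)=f(uk)$, so that $f(uk+uv)=g(k+v)$ and $f(uk+uw)=g(k+w)$. With this notation the claimed identity reads
\[
\sum_{k=1}^{w}\bigl[g(k+v)-g(k)\bigr]=\sum_{k=1}^{v}\bigl[g(k+w)-g(k)\bigr],
\]
which is now a statement about an arbitrary sequence $g$ and is manifestly invariant under the interchange $v\leftrightarrow w$: the left-hand side becomes the right-hand side precisely when $v$ and $w$ are swapped. The integer $u$ plays no further role, so the whole problem reduces to exhibiting a single expression, symmetric in $v$ and $w$, that both sides equal.

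First I would split each side into two sums and reindex the shifted piece. On the left, the substitution $j=k+v$ carries $\sum_{k=1}^{w}g(k+v)$ into $\sum_{j=v+1}^{v+w}g(j)$, and splitting the range then gives $\sum_{j=v+1}^{v+w}g(j)=\sum_{k=1}^{v+w}g(k)-\sum_{k=1}^{v}g(k)$. Consequently the left-hand side collapses to the symmetric form
\[
\sum_{k=1}^{v+w}g(k)-\sum_{k=1}^{v}g(k)-\sum_{k=1}^{w}g(k).
\]
Performing the identical two steps on the right-hand side, now with the substitution $j=k+w$, produces exactly the same expression because $v+w=w+v$. Comparing the two reductions proves the identity.

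I expect the only point needing care to be the bookkeeping of the summation ranges after the index shift, specifically the verification that $\sum_{k=v+1}^{v+w}g(k)=\sum_{k=1}^{v+w}g(k)-\sum_{k=1}^{v}g(k)$ holds for all positive integers $v$ and $w$. This is entirely routine, and notably no ordering hypothesis such as $w\ge v$ is required once both sides have been brought to the common symmetric form. An alternative route would be induction on $w$ with $v$ fixed, but the reindexing argument is shorter and makes the underlying $v\leftrightarrow w$ symmetry transparent, so I would present that one.
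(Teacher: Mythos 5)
Your proof is correct. Note that the paper itself contains no proof of Lemma~\ref{FS}: it is stated as a special case of more general telescoping identities proved in the external reference~\cite{adegoke}, so there is no internal argument to compare against. Your route --- absorbing the scale factor via $g(k)=f(uk)$, shifting the index with $j=k+v$, and collapsing both sides to the manifestly symmetric expression
\[
\sum_{k=1}^{v+w} g(k)\;-\;\sum_{k=1}^{v} g(k)\;-\;\sum_{k=1}^{w} g(k)
\]
--- is a complete, self-contained, and elementary verification. The one point you flagged, the range bookkeeping $\sum_{k=v+1}^{v+w} g(k)=\sum_{k=1}^{v+w} g(k)-\sum_{k=1}^{v} g(k)$, is indeed valid for all positive integers $v,w$ (the shifted range is nonempty since $w\ge 1$), and you are right that no ordering hypothesis like $w\ge v$ is needed. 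What your argument buys is independence from the citation, making the paper's foundational lemma verifiable in a few lines; what the paper's approach buys is uniformity, since the general results of~\cite{adegoke} also yield the signed variants in Lemmas~\ref{FS1} and~\ref{FS2}, where your plain reindexing would no longer work as stated: with the weights $(\pm 1)^{k-1}$ the shifted sum picks up a sign twist, which is exactly why those lemmas carry parity hypotheses on $v$ and $w$ that your unsigned case does not require.
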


\begin{lemma}\label{FS1}
If $f(k)$ is a real sequence and $u$, $v$ and $w$ are positive integers such that $v$ is even and $w$ is even, then
\[
\sum_{k = 1}^w {( \pm 1)^{k - 1} \left( {f(uk + uv) - f(uk)} \right)}  = \sum_{k = 1}^v {( \pm 1)^{k - 1} \left( {f(uk + uw) - f(uk)} \right)}\,. 
\]
	 
\end{lemma}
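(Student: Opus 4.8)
The plan is to reduce both sides to a common expression built from a single partial-sum function, exactly as one does for the unsigned identity of Lemma~\ref{FS}, the only new ingredient being that the evenness of $v$ and $w$ lets the alternating sign survive a shift of the summation index. I would write $\sigma = \pm 1$ for the fixed choice of sign and introduce the partial sums
\[
T(m) = \sum_{k=1}^m \sigma^{k-1} f(uk), \qquad T(0) = 0.
\]

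First I would expand the left-hand side as $\sum_{k=1}^w \sigma^{k-1} f(uk+uv) - T(w)$ and reindex the remaining sum by $j = k+v$, so that $uk+uv = uj$ and $\sigma^{k-1} = \sigma^{j-1}\sigma^{-v}$. Here the hypothesis that $v$ is even enters: since $\sigma = \pm 1$, we have $\sigma^{-v} = 1$, so the shifted sign $\sigma^{j-1}$ matches the sign appearing in $T$. Consequently $\sum_{k=1}^w \sigma^{k-1} f(uk+uv) = \sum_{j=v+1}^{v+w}\sigma^{j-1}f(uj) = T(v+w) - T(v)$, and the left-hand side collapses to $T(v+w) - T(v) - T(w)$.

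By the identical manipulation applied to the right-hand side — now shifting by the even integer $w$, so that again $\sigma^{-w}=1$ — the right-hand side equals $T(w+v) - T(w) - T(v)$. Since $T(v+w) = T(w+v)$, the two reduced forms coincide, which proves the identity.

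The only subtlety, and hence the single place I would be careful, is the sign bookkeeping in the reindexing: the reduction to $T$ works precisely because $v$ and $w$ are even, so that $\sigma^{-v}=\sigma^{-w}=1$; for an odd shift the sign would flip and the clean telescoping into $T$ would fail. This is exactly why the parity assumption is imposed here, whereas Lemma~\ref{FS}, carrying no alternating sign, needs no such hypothesis.
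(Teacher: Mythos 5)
Your proof is correct: writing $T(m)=\sum_{k=1}^m \sigma^{k-1}f(uk)$ and reindexing each side by the even shift reduces both sides to the symmetric expression $T(v+w)-T(v)-T(w)$, and your observation that $\sigma^{-v}=\sigma^{-w}=1$ is exactly where the parity hypothesis is needed. Note that the paper itself gives no proof of this lemma, deferring instead to the more general telescoping identities of~\cite{adegoke}; your argument is the natural self-contained version of that telescoping mechanism, so it supplies what the paper only cites, and it also makes transparent why Lemma~\ref{FS} needs no parity assumption while Lemma~\ref{FS2} (with $f(uk+uv)+f(uk)$ and $vw$ odd) requires a different sign bookkeeping.
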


\begin{lemma}\label{FS2}
If $f(k)$ is a real sequence and $u$, $v$ and $w$ are positive integers such that $vw$ is odd, then
\[
\sum_{k = 1}^w {( - 1)^{k - 1} \left( {f(uk + uv) + f(uk)} \right)}  = \sum_{k = 1}^v {( - 1)^{k - 1} \left( {f(uk + uw) + f(uk)} \right)}\,. 
\]	
\end{lemma}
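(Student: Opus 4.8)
The plan is to reduce the statement entirely to Lemma~\ref{FS} by absorbing the alternating factor $(-1)^{k-1}$ into the summand. First I would record the only role played by the hypothesis: $vw$ odd means precisely that $v$ and $w$ are \emph{both} odd, and this is exactly the parity that makes the sign absorption succeed. I will use the oddness of $v$ on the left-hand side and the oddness of $w$ on the right-hand side.

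Concretely, I would introduce the auxiliary real sequence $H(k) = (-1)^{k-1} f(uk)$. The second term of each summand is immediate, since $(-1)^{k-1} f(uk) = H(k)$. For the first term I would compare $(-1)^{k-1} f(u(k+v))$ with $H(k+v) = (-1)^{k+v-1} f(u(k+v))$; since $(-1)^{k-1}(-1)^{k+v-1} = (-1)^{2k+v-2} = (-1)^{v}$ and $v$ is odd, this gives $(-1)^{k-1} f(u(k+v)) = -H(k+v)$. Hence the left-hand side collapses to $\sum_{k=1}^{w}\bigl[H(k) - H(k+v)\bigr]$. Running the identical computation on the right-hand side, and invoking this time that $w$ is odd so that $(-1)^{w} = -1$, turns it into $\sum_{k=1}^{v}\bigl[H(k) - H(k+w)\bigr]$.

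At this point the two reduced expressions are exactly the negatives of the two sides of Lemma~\ref{FS} applied to the sequence $H$, with that lemma's parameter ``$u$'' taken to be $1$: the lemma yields $\sum_{k=1}^{w}\bigl[H(k+v)-H(k)\bigr] = \sum_{k=1}^{v}\bigl[H(k+w)-H(k)\bigr]$, and multiplying through by $-1$ produces precisely the claimed equality. The argument is therefore complete once the two reductions are in place.

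I expect the only delicate point to be the sign bookkeeping in the exponent, where I must pull the factor $(-1)^{v}$ (respectively $(-1)^{w}$) out cleanly and invoke oddness at exactly the right moment on each side; everything else is a direct substitution followed by a citation of Lemma~\ref{FS}. A useful sanity check on the device is that it explains why the $+$ sign inside the summand, together with odd $v,w$, is the correct companion to the $-$ sign with even parameters appearing in Lemma~\ref{FS1}: in each case the parity is chosen so that the alternating factor converts the pair into a genuine difference of the form required by the underlying telescoping identity.
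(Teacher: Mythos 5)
Your proof is correct. Setting $H(k)=(-1)^{k-1}f(uk)$, the oddness of $v$ gives $(-1)^{k-1}f(uk+uv)=-H(k+v)$, so the left side becomes $-\sum_{k=1}^{w}\left[H(k+v)-H(k)\right]$, and symmetrically (now using that $w$ is odd) the right side becomes $-\sum_{k=1}^{v}\left[H(k+w)-H(k)\right]$; Lemma~\ref{FS} applied to the sequence $H$ with that lemma's parameter $u$ taken to be $1$ then closes the argument, and your sign bookkeeping $(-1)^{k-1}(-1)^{k+v-1}=(-1)^{v}$ is accurate. The route is, however, genuinely different from the paper's, for the simple reason that the paper offers no internal proof at all: it states that Lemmas~\ref{FS}--\ref{FS2} are special cases of more general identities proved in~\cite{adegoke}, where they are established directly at the level of the shifted telescoping sums. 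Your reduction buys two things the citation does not: it makes the lemma self-contained within this paper (modulo Lemma~\ref{FS}, itself an elementary rearrangement identity), and it explains structurally why the hypothesis ``$vw$ odd'' is the right one --- both $v$ and $w$ must be odd so that the factors $(-1)^{v}$ and $(-1)^{w}$ each equal $-1$, converting the $+$ pair into the difference that Lemma~\ref{FS} requires. As you observe, the identical substitution with $v$ and $w$ even yields Lemma~\ref{FS1}, so your device unifies both signed lemmas as corollaries of the unsigned one, a dependence the paper's presentation leaves invisible.
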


\subsection{Product of a Fibonacci number and a generalized Fibonacci number}
\begin{lemma}[Howard~\cite{howard}, Corollary 3.5]\label{thm.fg}
For integers $a$, $b$, $c$,
\[
F_aG_{2b+a+c}=
\begin{cases}
F_{a+b}G_{a+b+c}-F_bG_{b+c} & \text{if $a$ is even,}\\
F_{a+b}G_{a+b+c}+F_bG_{b+c} & \text{if $a$ is odd.}
\end{cases}
\]
\end{lemma}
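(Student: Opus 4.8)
The plan is to collapse the two-case statement into one identity and then reduce it to a single, elementary Fibonacci fact. First I would absorb the case distinction into a sign: since $-(-1)^a$ equals $-1$ for $a$ even and $+1$ for $a$ odd, the claim is exactly
\[
F_a\,G_{2b+a+c}=F_{a+b}\,G_{a+b+c}-(-1)^a F_b\,G_{b+c}\,.
\]
Setting $d=b+c$ tidies the indices to $F_a G_{a+b+d}=F_{a+b}G_{a+d}-(-1)^aF_bG_d$, and the degenerate inputs $b=0$ and $a=0$ (both using $F_0=0$) already confirm the shape of the formula.

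The key structural observation is that both sides are \emph{linear} in the generalized sequence: each is a fixed combination of the values $G_j$ whose coefficients are built only from Fibonacci numbers and the sign $(-1)^a$, none of which depends on the seeds $G_0,G_1$. Hence $\mathrm{LHS}-\mathrm{RHS}$ is a linear functional of the pair $(G_0,G_1)$. Since every solution of $G_{i+1}=G_i+G_{i-1}$ is a linear combination of the two Binet solutions $G_n=\alpha^n$ and $G_n=\beta^n$, where $\alpha=\phi$ and $\beta=(1-\sqrt5)/2$ (so that $\alpha+\beta=1$ and $\alpha\beta=-1$), it suffices to prove the identity for these two pure geometric sequences; equivalently, one may check it for the two independent seed choices $G=F$ and $G=L$.

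For $G_n=\alpha^n$, dividing the identity through by $\alpha^{d}$ collapses it to
\[
\alpha^a F_{a+b}-\alpha^{a+b}F_a=(-1)^a F_b\,,
\]
which follows in two lines from $F_n=(\alpha^n-\beta^n)/(\alpha-\beta)$: one computes
\[
F_{a+b}-\alpha^{b}F_a=\frac{\alpha^{a+b}-\beta^{a+b}-\alpha^{b}(\alpha^a-\beta^a)}{\alpha-\beta}=\frac{\beta^{a}(\alpha^{b}-\beta^{b})}{\alpha-\beta}=\beta^{a}F_b\,,
\]
so that $\alpha^a F_{a+b}-\alpha^{a+b}F_a=\alpha^a\beta^a F_b=(\alpha\beta)^a F_b=(-1)^aF_b$. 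The sequence $G_n=\beta^n$ is handled identically with $\alpha\leftrightarrow\beta$ (the relation $\alpha\beta=-1$ being symmetric), giving $\beta^a F_{a+b}-\beta^{a+b}F_a=(-1)^aF_b$. By the linearity reduction these two special cases establish the identity for arbitrary $G$.

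Since each piece is short, there is no genuinely hard step; the only care required is (i) justifying the linearity reduction cleanly and (ii) the sign bookkeeping through $(\alpha\beta)^a=(-1)^a$, which is precisely what produces the even/odd dichotomy in the original statement. A direct induction on $a$ using the recurrence is also available, but it would force an explicit parity case analysis at every step, so I prefer the Binet-plus-linearity route, whose work is concentrated in the single displayed Fibonacci identity above.
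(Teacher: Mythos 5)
Your proof is correct. Every step checks out: the sign absorption $-(-1)^a$ correctly encodes the even/odd dichotomy; the reduction to the two geometric solutions is legitimate because both sides are linear in $G$ with seed-independent Fibonacci coefficients and the solution space of $G_{i+1}=G_i+G_{i-1}$ is two-dimensional, spanned by $\alpha^n$ and $\beta^n$; and the displayed computation $F_{a+b}-\alpha^b F_a=\beta^a F_b$, hence $\alpha^a F_{a+b}-\alpha^{a+b}F_a=(\alpha\beta)^a F_b=(-1)^a F_b$, is accurate, with the $\beta$ case following by the symmetric calculation. Note that your argument also covers negative indices, as the lemma's statement requires, since Binet's formula and the basis argument hold for doubly infinite solutions of the recurrence. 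There is, however, nothing in the paper to compare against: the authors do not prove this lemma at all, but import it by citation as Corollary 3.5 of Howard's paper on sums of squares of generalized Fibonacci numbers, where it arises as a consequence of the machinery developed there. Your Binet-plus-linearity route is therefore a genuine addition rather than an alternative to an internal argument, and it buys self-containedness at very low cost; the one point to present carefully in a final write-up is the linearity reduction itself (stating explicitly that $\alpha\neq\beta$ makes $\{\alpha^n,\beta^n\}$ a basis, or equivalently checking the two independent seed pairs $(G_0,G_1)=(0,1)$ and $(2,1)$), since that is the only step that is not pure computation.
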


\subsection{Product of a Lucas number and a generalized Fibonacci number}
\begin{lemma}[Vajda~\cite{vajda}, Formula 10a]\label{thm.lg}
For integers $a$, $b$,
\[
L_aG_b=
\begin{cases}
G_{b+a}+G_{b-a} & \text{if $a$ is even,}\\
G_{b+a}-G_{b-a} & \text{if $a$ is odd.}
\end{cases}
\]
\end{lemma}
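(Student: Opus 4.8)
The plan is to prove Lemma~\ref{thm.lg} directly from the Binet-type closed form for the sequence $G$. Because $G$ obeys $G_{i+1}=G_i+G_{i-1}$, whose characteristic equation $x^2=x+1$ has roots $\alpha=(1+\sqrt5)/2$ and $\beta=(1-\sqrt5)/2$, there exist constants $A$ and $B$, fixed by the seeds $G_0$ and $G_1$, such that $G_n=A\alpha^n+B\beta^n$ for every integer $n$; here the sequence is understood to be extended to negative indices through the same recurrence, so that the closed form stays valid even when $b-a<0$. For the Lucas numbers I would use the standard form $L_a=\alpha^a+\beta^a$.

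First I would multiply the two closed forms and expand into four terms, grouping the diagonal and the cross contributions:
\[
L_aG_b=(\alpha^a+\beta^a)(A\alpha^b+B\beta^b)=\bigl(A\alpha^{a+b}+B\beta^{a+b}\bigr)+\bigl(A\alpha^b\beta^a+B\alpha^a\beta^b\bigr).
\]
The first bracket is immediately $G_{a+b}$, so the whole task reduces to identifying the cross term $A\alpha^b\beta^a+B\alpha^a\beta^b$.

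The key step, and essentially the only one requiring care, is to rewrite this cross term using $\alpha\beta=-1$. From $\alpha\beta=-1$ one gets $\alpha^{-a}=(-1)^a\beta^a$ and $\beta^{-a}=(-1)^a\alpha^a$, whence
\[
G_{b-a}=A\alpha^{b-a}+B\beta^{b-a}=(-1)^a\bigl(A\alpha^b\beta^a+B\alpha^a\beta^b\bigr).
\]
Thus the cross term equals $(-1)^aG_{b-a}$, and combining it with the first bracket gives the single unified formula $L_aG_b=G_{a+b}+(-1)^aG_{b-a}$. Splitting this according to the parity of $a$ reproduces the two cases in the statement.

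I expect the main obstacle to be the sign bookkeeping in the cross term, namely keeping track of $(-1)^a$ when $a$ may be negative and confirming that the representation $G_n=A\alpha^n+B\beta^n$ is legitimately applied at the possibly negative index $b-a$; once $\alpha\beta=-1$ is in hand, both are routine. An alternative route would avoid Binet entirely: since each side is linear in the seeds $(G_0,G_1)$, it suffices to verify the identity on two linearly independent sequences, say $G=F$ and $G=L$, reducing the lemma to the classical $L_aF_b$ and $L_aL_b$ product formulas. I would nonetheless favour the Binet computation above as the most self-contained.
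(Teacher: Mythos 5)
Your proof is correct, but note that the paper offers no proof of this lemma at all: it is quoted verbatim from Vajda (Formula 10a) as a known identity, so your argument supplies what the paper delegates to a citation. The Binet computation goes through exactly as you describe: with $G_n=A\alpha^n+B\beta^n$ (valid for all integers $n$, since the recurrence runs backwards via $G_{i-1}=G_{i+1}-G_i$ and the closed form satisfies it identically once matched at $n=0,1$) and $L_a=\alpha^a+\beta^a$, the expansion splits into $G_{a+b}$ plus the cross term, and $\alpha\beta=-1$ gives $\alpha^{-a}=(-1)^a\beta^a$, $\beta^{-a}=(-1)^a\alpha^a$, hence $G_{b-a}=(-1)^a\bigl(A\alpha^b\beta^a+B\alpha^a\beta^b\bigr)$ and the unified formula $L_aG_b=G_{b+a}+(-1)^aG_{b-a}$; the sign bookkeeping is clean because $(-1)^{-a}=(-1)^a$ for every integer $a$. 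Your alternative route is also sound and arguably closer in spirit to how such identities are catalogued in the literature: both sides are linear in the pair of seeds $(G_0,G_1)$, and the pairs $(0,1)$ and $(2,1)$ span $\R^2$, so checking the identity for $G=F$ and $G=L$ (the classical $L_aF_b$ and $L_aL_b$ product formulas) suffices. The Binet version buys self-containedness and yields the single formula with the $(-1)^a$ factor, which is in fact the form used implicitly in the paper's proofs of Theorems~\ref{T1} and~\ref{T8}, where the two parity cases are invoked separately as identities~\eqref{equ.cm8jui8},~\eqref{equ.c37r3b0} and~\eqref{equ.jro3cuv},~\eqref{equ.i2dvppw}.
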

\subsection{Difference of products of a Fibonacci number and a generalized Fibonacci number}
\begin{lemma}[Vajda~\cite{vajda}, Formula 21]\label{thm.diff}
For integers $a$, $b$,
\[
F_bG_a-F_aG_b=(-1)^aG_0F_{b-a}\,.
\]
\end{lemma}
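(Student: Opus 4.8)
The plan is to prove the identity via the Binet representation, which treats all integers $a$ and $b$ on the same footing. Since the Fibonacci and generalized Fibonacci sequences obey the same recurrence, whose characteristic equation $x^{2}=x+1$ has the distinct roots $\alpha=(1+\sqrt5)/2$ and $\beta=(1-\sqrt5)/2$, I would write $F_{n}=(\alpha^{n}-\beta^{n})/(\alpha-\beta)$ and $G_{n}=A\alpha^{n}+B\beta^{n}$, where the constants $A$ and $B$ are fixed by the seeds and, in particular, satisfy $A+B=G_{0}$. The two structural facts I would rely on are $\alpha\beta=-1$ and $\alpha-\beta=\sqrt5$; both remain valid for negative exponents because $\alpha,\beta\neq0$, so the argument covers negative indices with no separate case analysis.

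First I would substitute these representations into the left-hand side $F_{b}G_{a}-F_{a}G_{b}$ and expand the two products. The terms proportional to $A\alpha^{a+b}$ and to $B\beta^{a+b}$ occur identically in both products and so cancel in the difference, leaving only the mixed terms. Collecting what survives gives
\[
F_{b}G_{a}-F_{a}G_{b}=\frac{A+B}{\alpha-\beta}\left(\alpha^{b}\beta^{a}-\alpha^{a}\beta^{b}\right),
\]
and the natural next move is to extract the common factor $(\alpha\beta)^{a}$, which produces $\alpha^{b}\beta^{a}-\alpha^{a}\beta^{b}=(\alpha\beta)^{a}\left(\alpha^{b-a}-\beta^{b-a}\right)$.

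To finish I would invoke the three elementary facts $(\alpha\beta)^{a}=(-1)^{a}$, $\alpha^{b-a}-\beta^{b-a}=(\alpha-\beta)F_{b-a}$, and $A+B=G_{0}$. Substituting these collapses the right-hand side to $(-1)^{a}G_{0}F_{b-a}$, which is exactly the asserted formula. Because nowhere did the computation use positivity of $a$ or $b$, the identity holds for all integers, as stated.

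I do not expect a genuine obstacle: the only place that demands attention is the bookkeeping in the cancellation of the unwanted terms and the sign carried by $(-1)^{a}$. Should one prefer to avoid irrational quantities altogether, an equivalent route is to start from the representation $G_{n}=G_{0}F_{n-1}+G_{1}F_{n}$; after substitution the coefficient of $G_{1}$ is $F_{b}F_{a}-F_{a}F_{b}=0$, while the coefficient of $G_{0}$ is $F_{b}F_{a-1}-F_{a}F_{b-1}$, which is d'Ocagne's identity and equals $(-1)^{a}F_{b-a}$. Either approach yields the result in a few lines once the appropriate representation is installed.
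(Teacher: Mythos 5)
Your proof is correct: the expansion of $F_bG_a-F_aG_b$ in Binet form does leave exactly the mixed terms with common factor $A+B=G_0$, and the identities $(\alpha\beta)^a=(-1)^a$ and $\alpha^{b-a}-\beta^{b-a}=(\alpha-\beta)F_{b-a}$ (both valid for negative exponents) finish the job. Note, however, that the paper does not prove this lemma at all --- it is imported verbatim from Vajda's book (Formula 21) as a black box, so there is no internal argument to compare yours against. What your write-up adds is a self-contained verification, and it handles the one point the paper leaves implicit: the lemma is stated for \emph{all} integers $a$, $b$, even though the introduction defines $G_i$ only for $i\ge 0$, and your observation that the Binet representation $G_n=A\alpha^n+B\beta^n$ remains valid under the backward extension of the recurrence makes that explicit. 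Your alternative rational route is also sound: writing $G_n=G_0F_{n-1}+G_1F_n$ kills the $G_1$-coefficient identically and reduces the claim to $F_bF_{a-1}-F_aF_{b-1}=(-1)^aF_{b-a}$, a form of d'Ocagne's identity; this trades the irrational bookkeeping for a previously known Fibonacci identity, which is arguably closer in spirit to how Vajda's catalogue derives such formulas. Either version would serve as a legitimate replacement for the citation.
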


\section{Main Results: Symmetry properties}\label{sec.main}
\subsection{Sums of products of reciprocals}\label{sec.main1}
\begin{theorem}\label{thm.moyg61s}
If $n$ and $q$ are nonnegative integers and $p$ is a nonzero integer, then
\[
F_{pq} \sum_{k = 1}^n {\frac{{( - 1)^{pk} }}{{G_{pk} G_{pk + pq} }}}  = F_{pn} \sum_{k = 1}^q {\frac{{( - 1)^{pk} }}{{G_{pk} G_{pk + pn} }}}\,.
\]

\end{theorem}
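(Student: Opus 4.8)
The plan is to recognize the summand as a telescoping difference and then invoke the symmetry already packaged in Lemma~\ref{FS}. The engine is Lemma~\ref{thm.diff}: setting $a=pk$ and $b=pk+pq$ there gives
\[
F_{pk+pq}G_{pk}-F_{pk}G_{pk+pq}=(-1)^{pk}G_0 F_{pq}\,.
\]
Dividing through by $G_{pk}G_{pk+pq}$ (all nonzero by hypothesis) rewrites the left-hand side as a difference of the single-index quantity $F_m/G_m$. Concretely, with $f(m):=F_m/G_m$ I would obtain
\[
f(pk+pq)-f(pk)=\frac{(-1)^{pk}G_0 F_{pq}}{G_{pk}G_{pk+pq}}\,,
\]
so that $f$ is a fixed sequence, independent of $q$, whose discrete difference reproduces the summand together with the prefactor $G_0F_{pq}$.

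With this telescoping identity in hand, I would apply Lemma~\ref{FS} with $(u,v,w)=(p,q,n)$. Summing the displayed relation over $k=1,\dots,n$ turns the left-hand side of the theorem (times $G_0$) into $\sum_{k=1}^{n}[f(pk+pq)-f(pk)]$, and the same relation with the roles of $q$ and $n$ interchanged turns the right-hand side (times $G_0$) into $\sum_{k=1}^{q}[f(pk+pn)-f(pk)]$. Lemma~\ref{FS} asserts exactly that these two sums are equal, so I arrive at
\[
G_0 F_{pq}\sum_{k=1}^{n}\frac{(-1)^{pk}}{G_{pk}G_{pk+pq}}=G_0 F_{pn}\sum_{k=1}^{q}\frac{(-1)^{pk}}{G_{pk}G_{pk+pn}}\,,
\]
and cancelling the common factor $G_0$ yields the claim.

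The step that needs care---and what I expect to be the main obstacle---is the degenerate case $G_0=0$, which includes the Fibonacci sequence itself (the very case used in~\eqref{equ.zbajbkj}). There the factor $G_0$ cannot be cancelled, and indeed $f(m)=F_m/G_m\equiv 1$ becomes constant, so the telescoping collapses to the trivial statement $0=0$. To cover this case I would argue by continuity: writing $G_m=F_{m-1}G_0+F_m G_1$, both sides of the asserted identity are rational functions of the seeds $(G_0,G_1)$, they agree on the set $\{G_0\neq0\}$ where the denominators do not vanish, and hence they agree at $G_0=0$ as well by continuity. Alternatively, one may re-run the telescoping with the seed-free choice $f(m)=F_{m-2}/F_m$, for which the Vajda product identity gives $f(m+pq)-f(m)=(-1)^{m}F_{pq}/(F_mF_{m+pq})$ directly, settling the pure Fibonacci case on its own.

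Finally, I would dispose of the boundary and sign conventions not covered by Lemma~\ref{FS}, which is stated for positive $u,v,w$. If $n=0$ or $q=0$ both sides vanish, since one sum is empty and $F_0=0$; and the case of negative $p$ reduces to the positive case through the parity relations $F_{-m}=(-1)^{m+1}F_m$ and $(-1)^{pk}=(-1)^{|p|k}$.
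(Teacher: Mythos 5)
Your proposal is essentially the paper's own proof: divide Lemma~\ref{thm.diff} (with $a=pk$ and $b=pk+pq$, respectively $b=pk+pn$) by $G_aG_b$, then feed $f(k)=F_k/G_k$ into Lemma~\ref{FS} with $(u,v,w)=(p,q,n)$ and cancel $G_0$. Your additional handling of the degenerate seed $G_0=0$ (where $f$ is constant, the telescoping collapses to $0=0$, and the factor $G_0$ cannot be cancelled --- precisely the Fibonacci case the paper itself invokes in~\eqref{equ.zbajbkj}) patches a gap the paper's write-up silently passes over, and both of your fixes (the continuity argument in the seeds and the seed-free choice $f(m)=F_{m-2}/F_m$) are sound.
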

\begin{proof}
Dividing through the identity in Lemma~\ref{thm.diff} by $G_aG_b$ and setting $b=pk+pq$ and $a=pk$, we have:
\begin{equation}\label{equ.azh431k}
\frac{F_{pk+pq}}{G_{pk+pq}}-\frac{F_{pk}}{G_{pk}}=(-1)^{pk}\frac{G_0F_{pq}}{G_{pk}G_{pk+pq}}\,.
\end{equation}
Similarly,
\begin{equation}\label{equ.gw81jiy}
\frac{F_{pk+pn}}{G_{pk+pn}}-\frac{F_{pk}}{G_{pk}}=(-1)^{pk}\frac{G_0F_{pn}}{G_{pk}G_{pk+pn}}\,.
\end{equation}
We now use the sequence $f(k)=F_k/G_k$ in Lemma~\ref{FS} with $u=p$, $v=q$ and $w=n$, while taking into consideration identities~\eqref{equ.azh431k} and~\eqref{equ.gw81jiy}.
\end{proof}
\begin{theorem}\label{thm.h4ozka6}
If $n$ and $q$ are nonnegative \underline{even} integers and $p$ is a nonzero integer, then
\[
F_{pq} \sum_{k = 1}^{n} {\frac{{( \pm 1)^{k(p - 1)} }}{{G_{pk} G_{pk + pq} }}}  = F_{pn} \sum_{k = 1}^{q} {\frac{{( \pm 1)^{k(p - 1)} }}{{G_{pk} G_{pk + pn} }}}\,.
\]

\end{theorem}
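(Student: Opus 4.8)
The plan is to run the argument for Theorem~\ref{thm.moyg61s} again, with the single change that the telescoping Lemma~\ref{FS} is replaced by Lemma~\ref{FS1}. This one substitution is exactly what forces the restriction that $n$ and $q$ be even, since Lemma~\ref{FS1} requires its parameters $v=q$ and $w=n$ to be even. Crucially, the two reciprocal-difference identities \eqref{equ.azh431k} and \eqref{equ.gw81jiy}, obtained from Lemma~\ref{thm.diff} via $f(k)=F_k/G_k$, carry over completely unchanged, so I would reuse them verbatim rather than re-derive anything.

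Applying Lemma~\ref{FS1} to the sequence $f(k)=F_k/G_k$ with $u=p$, $v=q$, $w=n$, and then substituting \eqref{equ.azh431k} and \eqref{equ.gw81jiy}, turns the telescoping identity into
\[
G_0 F_{pq}\sum_{k=1}^n\frac{(\pm1)^{k-1}(-1)^{pk}}{G_{pk}G_{pk+pq}}=G_0 F_{pn}\sum_{k=1}^q\frac{(\pm1)^{k-1}(-1)^{pk}}{G_{pk}G_{pk+pn}}.
\]
After cancelling the common factor $G_0$, the entire theorem reduces to showing that the sign factor $(\pm1)^{k-1}(-1)^{pk}$ agrees, up to a single $k$-independent sign that cancels between the two sides, with the target factor $(\pm1)^{k(p-1)}$.

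I expect this sign bookkeeping to be the only real obstacle. For the lower sign it is clean for every $p$: since $(p+1)k$ and $(p-1)k$ differ by the even number $2k$,
\[
(-1)^{k-1}(-1)^{pk}=(-1)^{(p+1)k-1}=-(-1)^{(p+1)k}=-(-1)^{k(p-1)},
\]
and the global $-1$ cancels, yielding precisely the lower-sign statement. The upper sign I would dispose of by the parity of $p$: when $p$ is even, $(-1)^{pk}=1=(+1)^{k(p-1)}$, so the claim is already contained in Lemma~\ref{FS} (equivalently, in Theorem~\ref{thm.moyg61s}); when $p$ is odd, $p-1$ is even, so $(\pm1)^{k(p-1)}\equiv1$ and the upper-sign identity is literally the lower-sign identity just proved. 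The delicate point throughout is to keep track of the interaction between the sign $(\pm1)^{k-1}$ supplied by Lemma~\ref{FS1}, the factor $(-1)^{pk}$ inherited from Lemma~\ref{thm.diff}, and the exponent $k(p-1)$ demanded by the statement; once these are reconciled as above, the identity follows.
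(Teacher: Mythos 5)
Your proposal is correct and is essentially the paper's own proof: the paper likewise applies Lemma~\ref{FS1} to $f(k)=F_k/G_k$ with $u=p$, $v=q$, $w=n$, relying on the already-derived identities~\eqref{equ.azh431k} and~\eqref{equ.gw81jiy}. Your only addition is to carry out explicitly the sign reconciliation between $(\pm 1)^{k-1}(-1)^{pk}$ and $(\pm 1)^{k(p-1)}$ (split by the parity of $p$), which the paper leaves implicit, and your case analysis there is accurate.
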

\begin{proof}
We use the sequence $f(k)=F_k/G_k$ in Lemma~\ref{FS1} with $u=p$, $v=q$ and $w=n$.
\end{proof}
\subsection{First-power sums}\label{sec.main2}
\begin{theorem}\label{T1}
	If $p$, $q$, $n$ and $t$ are integers such that $pqn$ is odd, then
	\begin{equation}\label{equ.r6v9g43}
		L_{pq} \sum_{k = 1}^{2n} {( \pm 1)^{k - 1} G_{pk + pq + t} }  = L_{pn} \sum_{k = 1}^{2q} {( \pm 1)^{k - 1} G_{pk + pn + t} }\,, 
	\end{equation}
	\begin{equation}\label{equ.r0qdvj9}
		L_{pq} \sum_{k = 1}^{n} { G_{2pk + pq + t} }  = L_{pn} \sum_{k = 1}^{q} {G_{2pk + pn + t} }\,. 
	\end{equation}

\end{theorem}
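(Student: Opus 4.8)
The plan is to follow the template of the proof of Theorem~\ref{thm.moyg61s}, but with Lemma~\ref{thm.lg} (Vajda's Formula~10a) playing the role that Lemma~\ref{thm.diff} played there. The key observation is that Lemma~\ref{thm.lg} rewrites a Lucas multiple $L_a G_b$ as a \emph{difference} $G_{b+a}-G_{b-a}$ exactly when $a$ is odd, and a difference of this shape is precisely the ingredient that the telescoping identities of Lemmas~\ref{FS}--\ref{FS2} collapse. The hypothesis that $pqn$ is odd forces $p$, $q$ and $n$ each to be odd, so both $pq$ and $pn$ are odd and Lemma~\ref{thm.lg} applies in its subtraction form on both sides.

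For~\eqref{equ.r6v9g43} I would introduce the auxiliary sequence $g(k)=G_{k+t}$. Applying Lemma~\ref{thm.lg} with $a=pq$ and $b=pk+pq+t$ gives $L_{pq}G_{pk+pq+t}=G_{pk+2pq+t}-G_{pk+t}=g(pk+2pq)-g(pk)$, and similarly $L_{pn}G_{pk+pn+t}=g(pk+2pn)-g(pk)$. Thus the left side of~\eqref{equ.r6v9g43} becomes $\sum_{k=1}^{2n}(\pm1)^{k-1}(g(pk+2pq)-g(pk))$ and the right side becomes $\sum_{k=1}^{2q}(\pm1)^{k-1}(g(pk+2pn)-g(pk))$. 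These are equated by Lemma~\ref{FS1} with $u=p$, $v=2q$ and $w=2n$; the parity requirement of that lemma ($v$ and $w$ even) holds automatically since $2q$ and $2n$ are even.

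For~\eqref{equ.r0qdvj9} I would use the same $g(k)=G_{k+t}$ but route it through Lemma~\ref{FS}. Here Lemma~\ref{thm.lg} with $a=pq$, $b=2pk+pq+t$ gives $L_{pq}G_{2pk+pq+t}=g(2pk+2pq)-g(2pk)$, and with $a=pn$ gives $L_{pn}G_{2pk+pn+t}=g(2pk+2pn)-g(2pk)$. The asserted equality then reads $\sum_{k=1}^{n}(g(2pk+2pq)-g(2pk))=\sum_{k=1}^{q}(g(2pk+2pn)-g(2pk))$, which is exactly Lemma~\ref{FS} taken with $u=2p$, $v=q$ and $w=n$.

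Once the substitutions are in place the argument is essentially bookkeeping, so the only genuine obstacle is organizational: pairing each identity with the correct telescoping lemma (Lemma~\ref{FS1} for the alternating sum, Lemma~\ref{FS} for the plain sum), selecting $u,v,w$ so the index shifts $2pq$ and $2pn$ align, and checking the parity hypotheses. The decisive structural point --- and the reason the hypothesis reads ``$pqn$ odd'' --- is that the oddness of $pq$ and $pn$ is exactly what selects the subtraction branch of Lemma~\ref{thm.lg}; if these indices were even, Lemma~\ref{thm.lg} would instead yield the sum $G_{b+a}+G_{b-a}$, which no longer fits the difference pattern of Lemma~\ref{FS1} and would push one toward a companion statement based on the sum-telescoping Lemma~\ref{FS2}. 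One small point to tidy up is that the telescoping lemmas are stated for positive $u,v,w$, so to apply them verbatim one takes $p,q,n$ positive and then extends to the remaining odd integers via the recurrence.
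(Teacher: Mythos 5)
Your proposal is correct and matches the paper's own proof essentially line for line: identity~\eqref{equ.r6v9g43} via $f(k)=G_{k+t}$ in Lemma~\ref{FS1} with $u=p$, $v=2q$, $w=2n$ combined with the odd case of Lemma~\ref{thm.lg}, and identity~\eqref{equ.r0qdvj9} via plain telescoping through Lemma~\ref{FS}. If anything, your parameter choice for the second identity ($f(k)=G_{k+t}$ with $u=2p$, $v=q$, $w=n$) cleans up a small inconsistency in the paper, which pairs $f(k)=G_{2k+t}$ with $u=2p$ even though only one of the two doublings is intended.
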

\begin{proof}
Consider the generalized Fibonacci sequence $f(k)=G_{k+t}$. If we choose $u=p$, $v=2q$ and $w=2n$, then Lemma~\ref{FS1} gives
\begin{equation}\label{equ.cuj0nfu}
\sum_{k = 1}^{2n} {(\pm 1)^{k-1}\left( {G_{pk + 2pq + t}-G_{pk + t} } \right)}  = \sum_{k = 1}^{2q} {(\pm 1)^{k-1}\left( {G_{pk + 2pn + t}-G_{pk + t} } \right)}\,.
\end{equation}
But from the second identity of Lemma~\ref{thm.lg} we have
\begin{equation}\label{equ.cm8jui8}
G_{pk + 2pq + t}  - G_{pk + t}  = L_{pq} G_{pk + pq + t},\quad\mbox{$pq$ odd}\,, 
\end{equation}
and
\begin{equation}\label{equ.c37r3b0}
G_{pk + 2pn + t}  - G_{pk + t}  = L_{pn} G_{pk + pn + t},\quad\mbox{$pn$ odd}\,.
\end{equation}
Using \eqref{equ.cm8jui8} and~\eqref{equ.c37r3b0} in~\eqref{equ.cuj0nfu}, identity~\eqref{equ.r6v9g43} is proved. 

\bigskip

The proof of identity~\eqref{equ.r0qdvj9} is similar, we use the sequence $f(k)=G_{2k+t}$ in Lemma~\ref{FS} with \mbox{$u=2p$}, $v=q$ and $w=n$.

\end{proof}
\begin{theorem}\label{TX}
	If $p$, $q$, $n$ and $t$ are integers such that $pqn$ is odd or $q$ and $n$ are even, then
	\[
		F_{pq} \sum_{k = 1}^{n} {( - 1)^{k - 1} G_{2pk + pq + t} }  = F_{pn} \sum_{k = 1}^{q} {( - 1)^{k - 1} G_{2pk + pn + t} } \,.
	\]
\begin{proof}
Consider the sequence $f(k)=F_kG_{k+t}$. If we choose \mbox{$u=p$}, $v=q$ and $w=n$, then Lemma~\ref{FS2} gives
\begin{equation}\label{equ.isldtcl}
\begin{split}
&\sum_{k = 1}^{n} {(-1)^{k-1}\left( {F_{pk+pq}G_{pk + pq + t}+F_{pk}G_{pk + t} } \right)}\\
&\quad  = \sum_{k = 1}^{q} {(-1)^{k-1}\left( {F_{pk+pn}G_{pk + pn + t}+F_{pk}G_{pk + t} } \right)}\,.
\end{split}
\end{equation}
From the second identity of Lemma~\ref{thm.fg} we have
\begin{equation}\label{equ.nzzx5ve}
F_{pk+pq}G_{pk + pq + t}+F_{pk}G_{pk + t}  = F_{pq} G_{2pk + pq + t},\quad\mbox{$pq$ odd}\,, 
\end{equation}
and
\begin{equation}\label{equ.gn8wgtn}
F_{pk+pn}G_{pk + pn + t}+F_{pk}G_{pk + t}  = F_{pn} G_{2pk + pn + t},\quad\mbox{$pn$ odd}\,.
\end{equation}
The theorem then follows from using \eqref{equ.nzzx5ve} and~\eqref{equ.gn8wgtn} in~\eqref{equ.isldtcl}. If $q$ and $n$ are even then we use  $f(k)=F_kG_{k+t}$ with \mbox{$u=p$}, $v=q$ and $w=n$ in Lemma~\ref{FS1} together with the first identity of Lemma~\ref{thm.fg}.
\end{proof}

\end{theorem}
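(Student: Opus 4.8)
The plan is to work with the auxiliary sequence $f(k) = F_k G_{k+t}$, which bundles a Fibonacci factor together with a generalized Fibonacci factor so that a single telescoping identity governs both summations. The choice of telescoping lemma is dictated by the parity hypotheses: when $pqn$ is odd I would feed $f$ into Lemma~\ref{FS2} (whose hypothesis that $vw$ be odd is met with $v=q$, $w=n$), whereas when $q$ and $n$ are even I would use Lemma~\ref{FS1} with the minus sign. In both cases the parameters are $u=p$, $v=q$, $w=n$.

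Applying Lemma~\ref{FS2} in the odd case produces
\[
\sum_{k=1}^{n} (-1)^{k-1}\bigl(f(pk+pq)+f(pk)\bigr) = \sum_{k=1}^{q} (-1)^{k-1}\bigl(f(pk+pn)+f(pk)\bigr),
\]
whose summands are $F_{pk+pq}G_{pk+pq+t}+F_{pk}G_{pk+t}$ on the left and the analogue with $q\leftrightarrow n$ on the right. The decisive step is to recognize these combinations as instances of Howard's identity (Lemma~\ref{thm.fg}): choosing $a=pq$, $b=pk$, $c=t$ gives $2b+a+c=2pk+pq+t$, and since $pq$ is odd the odd branch yields exactly $F_{pq}G_{2pk+pq+t}=F_{pk+pq}G_{pk+pq+t}+F_{pk}G_{pk+t}$. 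The symmetric substitution $a=pn$ handles the right-hand side. Because $F_{pq}$ and $F_{pn}$ do not depend on the summation index $k$, they factor out of their respective sums, and the theorem follows at once.

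For the even case I would instead invoke the difference form of Lemma~\ref{FS1} (with the minus sign, to produce the $(-1)^{k-1}$ weights), giving summands $F_{pk+pq}G_{pk+pq+t}-F_{pk}G_{pk+t}$. Now $a=pq$ is even, so the even branch of Lemma~\ref{thm.fg} supplies $F_{pq}G_{2pk+pq+t}=F_{pk+pq}G_{pk+pq+t}-F_{pk}G_{pk+t}$, and the remainder of the argument is identical.

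The only real subtlety --- and the step I expect to be the main obstacle --- is the bookkeeping that aligns the parities: one must verify that the choice $a=pq$ (resp.\ $a=pn$) lands in the correct branch of Howard's identity (the sum branch for odd $a$, the difference branch for even $a$) and that this branch is compatible with whichever telescoping lemma the hypothesis forces, namely that Lemma~\ref{FS2} pairs sums with odd $vw$ while Lemma~\ref{FS1} pairs differences with even $v,w$. Once this matching is confirmed, the collapse of each two-term summand into the single product $F_{pq}G_{2pk+pq+t}$ is precisely what makes the common factor $F_{pq}$ extractable, and the symmetry between the two sides is inherited directly from the symmetry already present in Lemmas~\ref{FS1} and~\ref{FS2}.
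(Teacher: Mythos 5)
Your proposal is correct and follows essentially the same route as the paper: the sequence $f(k)=F_kG_{k+t}$ with $u=p$, $v=q$, $w=n$ fed into Lemma~\ref{FS2} for the odd case and Lemma~\ref{FS1} (minus sign) for the even case, with the two-term summands collapsed via the matching branch of Howard's identity (Lemma~\ref{thm.fg}) so that $F_{pq}$ and $F_{pn}$ factor out. The parity bookkeeping you flag as the main subtlety is handled exactly as in the paper, since $pqn$ odd forces both $pq$ and $pn$ odd, and $q,n$ even forces both even.
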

\begin{theorem}\label{T5}
	If $p$, $q$, $n$ and $t$ are integers such that $p$ is even or $q$ and $n$ are even, then
	\[
	F_{pq} \sum_{k = 1}^n {G_{2pk + pq + t} }  = F_{pn} \sum_{k = 1}^q {G_{2pk + pn + t} }\,.
	\]
	
\end{theorem}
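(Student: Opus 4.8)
The plan is to treat this as the unsigned companion of Theorem~\ref{TX}: there the alternating telescoping Lemma~\ref{FS2} paired with the odd (sum) branch of Howard's identity, whereas here the absence of the factor $(-1)^{k-1}$ means the plain telescoping Lemma~\ref{FS} should pair instead with the even (difference) branch of Lemma~\ref{thm.fg}. Concretely, I would take $f(k)=F_kG_{k+t}$ and apply Lemma~\ref{FS} with $u=p$, $v=q$ and $w=n$, which produces
\[
\sum_{k=1}^n\left(F_{pk+pq}G_{pk+pq+t}-F_{pk}G_{pk+t}\right)=\sum_{k=1}^q\left(F_{pk+pn}G_{pk+pn+t}-F_{pk}G_{pk+t}\right).
\]

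The next step is to collapse each telescoped difference. Taking $a=pq$, $b=pk$, $c=t$ in the even case of Lemma~\ref{thm.fg} gives $F_{pk+pq}G_{pk+pq+t}-F_{pk}G_{pk+t}=F_{pq}G_{2pk+pq+t}$ whenever $pq$ is even, and symmetrically the right-hand summand equals $F_{pn}G_{2pk+pn+t}$ whenever $pn$ is even. Substituting both and pulling the constants $F_{pq}$ and $F_{pn}$ out of their respective sums delivers the stated identity.

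The one point meriting care — and what I would flag as the crux rather than a genuine obstacle — is checking that the hypothesis is precisely what permits the even branch on both sides. That branch requires $pq$ and $pn$ to be simultaneously even, and distributing the conjunction over the disjunction shows that $(pq\ \text{even})\wedge(pn\ \text{even})$ is equivalent to ``$p$ even, or both $q$ and $n$ even'', exactly the condition assumed. Since no alternating signs intervene, there is no odd-branch bookkeeping to reconcile, so beyond this parity check the argument is a direct transcription of the Theorem~\ref{TX} template.
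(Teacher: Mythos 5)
Your proposal is correct and is essentially the paper's own proof: the paper likewise takes $f(k)=F_kG_{k+t}$ in Lemma~\ref{FS} with $u=p$, $v=q$, $w=n$ and collapses each telescoped difference via the even (first) branch of Lemma~\ref{thm.fg}, under exactly the parity conditions $pq$ even and $pn$ even. Your explicit check that these two conditions together are equivalent to the stated hypothesis (``$p$ even, or $q$ and $n$ both even'') is a point the paper leaves implicit, but the argument is otherwise identical.
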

\begin{proof}
Consider the sequence $f(k)=F_kG_{k+t}$. Lemma~\ref{FS} with  \mbox{$u=p$}, $v=q$ and $w=n$ gives
\begin{equation}\label{equ.tehey2a}
\begin{split}
&\sum_{k = 1}^{n} {\left( {F_{pk+pq}G_{pk + pq + t}-F_{pk}G_{pk + t} } \right)}\\
&\quad  = \sum_{k = 1}^{q} {\left( {F_{pk+pn}G_{pk + pn + t}-F_{pk}G_{pk + t} } \right)}\,.
\end{split}
\end{equation}
From the first identity of Lemma~\ref{thm.fg} we have
\begin{equation}\label{equ.vtbejuo}
F_{pk+pq}G_{pk + pq + t}-F_{pk}G_{pk + t}  = F_{pq} G_{2pk + pq + t},\quad\mbox{$pq$ even}\,, 
\end{equation}
and
\begin{equation}\label{equ.p2n0qq9}
F_{pk+pn}G_{pk + pn + t}-F_{pk}G_{pk + t}  = F_{pn} G_{2pk + pn + t},\quad\mbox{$pn$ even}\,.
\end{equation}
Using \eqref{equ.vtbejuo} and~\eqref{equ.p2n0qq9} in~\eqref{equ.tehey2a}, Theorem~\ref{T5} is proved.
\end{proof}
\begin{theorem}\label{T9}
	If $p$, $q$, $n$ and $t$ are integers such that $p$ is even, then
	\[
	F_{pq} \sum_{k = 1}^{2n} {( \pm 1)^{k - 1} G_{pk + pq+t} }  = F_{pn} \sum_{k = 1}^{2q} {( \pm 1)^{k - 1} G_{pk + pn+t} }\,.
	\]
		
\end{theorem}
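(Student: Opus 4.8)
The plan is to mirror the proof of Theorem~\ref{T1}, but to replace the Lucas identity of Lemma~\ref{thm.lg} with Howard's identity (Lemma~\ref{thm.fg}), so that the multiplier out front becomes the Fibonacci number $F_{pq}$ rather than the Lucas number $L_{pq}$. The alternating signs $(\pm1)^{k-1}$ together with the even summation limits $2n$ and $2q$ signal that the correct telescoping engine is Lemma~\ref{FS1}, exactly as in the proof of~\eqref{equ.r6v9g43}.

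The decisive choice is to apply Lemma~\ref{FS1} to the sequence $f(k)=F_kG_{k+t}$ with the parameters $u=p/2$, $v=2q$ and $w=2n$. Here the hypothesis that $p$ is \emph{even} is used twice and is precisely what makes the argument work: it guarantees that $u=p/2$ is an integer, and it forces both $pq$ and $pn$ to be even. Since $v=2q$ and $w=2n$ are even, Lemma~\ref{FS1} applies and yields
\[
\sum_{k=1}^{2n}(\pm1)^{k-1}\Bigl(f(\tfrac{pk}{2}+pq)-f(\tfrac{pk}{2})\Bigr)=\sum_{k=1}^{2q}(\pm1)^{k-1}\Bigl(f(\tfrac{pk}{2}+pn)-f(\tfrac{pk}{2})\Bigr).
\]
Next I would collapse each inner difference using the first (even) case of Lemma~\ref{thm.fg}. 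Taking $a=pq$, $b=pk/2$ and $c=t$ there (legitimate because $pq$ is even), one obtains
\[
f(\tfrac{pk}{2}+pq)-f(\tfrac{pk}{2})=F_{pk/2+pq}G_{pk/2+pq+t}-F_{pk/2}G_{pk/2+t}=F_{pq}G_{pk+pq+t},
\]
and the analogous identity with $pn$ in place of $pq$ handles the right-hand side. Substituting these two evaluations into the displayed telescoping identity and pulling the constants $F_{pq}$ and $F_{pn}$ outside the respective sums gives exactly the claimed symmetry relation.

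The only real subtlety—and the step I would check most carefully—is the bookkeeping in the application of Lemma~\ref{thm.fg}: one must verify that the choice $a=pq$ produces the single shift $pk+pq+t$ in the index of $G$, rather than the doubled shift $2pk+\cdots$ that appears in Theorems~\ref{TX} and~\ref{T5}. This is exactly why the half-step $u=p/2$, rather than $u=p$, is forced. With $u=p$ the same computation would instead return $F_{2pq}$ and the index $2pk+2pq+t$, a genuinely different identity; it is the evenness of $p$ that lets us halve the step, keep $u$ an integer, and land on the stated result.
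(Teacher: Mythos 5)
Your proof is correct and is essentially the paper's own argument: Lemma~\ref{FS1} applied to $f(k)=F_kG_{k+t}$ with even limits $v=2q$, $w=2n$, followed by the even case of Lemma~\ref{thm.fg} to collapse each difference. The only difference is a reparametrization: the paper takes $u=p$ and literally derives the identity with $2p$ in place of $p$ (via~\eqref{equ.sqqfbax} and~\eqref{equ.i6ogk6d}), tacitly renaming $2p$ as the even $p$ of the statement, whereas your choice $u=p/2$ (legitimate precisely because $p$ is even) lands on the stated form directly — so the step you flag as ``genuinely different'' is in fact the paper's route, equivalent to yours after this renaming.
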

\begin{proof}
Consider the sequence $f(k)=F_kG_{k+t}$. Lemma~\ref{FS1} with  \mbox{$u=p$}, $v=2q$ and $w=2n$ gives
\begin{equation}\label{equ.q4swp2w}
\begin{split}
&\sum_{k = 1}^{2n} {(\pm 1)^{k-1}\left( {F_{pk+2pq}G_{pk + 2pq + t}-F_{pk}G_{pk + t} } \right)}\\
&\quad  = \sum_{k = 1}^{2q} {(\pm 1)^{k-1}\left( {F_{pk+2pn}G_{pk + 2pn + t}-F_{pk}G_{pk + t} } \right)}\,.
\end{split}
\end{equation}
From identities~\eqref{equ.vtbejuo} and~\eqref{equ.p2n0qq9} we have
\begin{equation}\label{equ.sqqfbax}
F_{pk+2pq}G_{pk + 2pq + t}-F_{pk}G_{pk + t}  = F_{2pq} G_{2pk + 2pq + t}\,, 
\end{equation}
and
\begin{equation}\label{equ.i6ogk6d}
F_{pk+2pn}G_{pk + 2pn + t}-F_{pk}G_{pk + t}  = F_{2pn} G_{2pk + 2pn + t}\,.
\end{equation}
Using \eqref{equ.sqqfbax} and~\eqref{equ.i6ogk6d} in~\eqref{equ.q4swp2w}, Theorem~\ref{T9} is proved.
\end{proof}
\begin{theorem}\label{T8}
	If $p$, $q$, $n$ and $t$ are integers such that $p$ is even and $nq$ is odd, then
	\[
	L_{pq} \sum_{k = 1}^{n} {( - 1)^{k - 1} G_{2pk + pq + t} }  = L_{pn} \sum_{k = 1}^{q} {( - 1)^{k - 1} G_{2pk + pn + t} }\,,
	\]

\end{theorem}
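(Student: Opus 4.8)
The plan is to treat this as the alternating analogue of identity~\eqref{equ.r0qdvj9}: the summand $G_{2pk+pq+t}$ and the Lucas coefficient $L_{pq}$ are identical, only the sign $(-1)^{k-1}$ is new, so the natural move is to swap the unsigned telescoping Lemma~\ref{FS} for the signed one, Lemma~\ref{FS2}. Accordingly I would take the sequence $f(k)=G_{2k+t}$ and apply Lemma~\ref{FS2} with $u=p$, $v=q$ and $w=n$. The admissibility requirement for Lemma~\ref{FS2} is that $vw=qn$ be odd, which is precisely the hypothesis $nq$ odd; this is the structural reason the signed version must restrict $q$ and $n$ to be odd, whereas the companion unsigned statement of Theorem~\ref{T5} does not.

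With this choice one has $f(uk+uv)=G_{2pk+2pq+t}$, $f(uk)=G_{2pk+t}$ and $f(uk+uw)=G_{2pk+2pn+t}$, so Lemma~\ref{FS2} would yield
\[
\sum_{k=1}^{n}(-1)^{k-1}\!\left(G_{2pk+2pq+t}+G_{2pk+t}\right)=\sum_{k=1}^{q}(-1)^{k-1}\!\left(G_{2pk+2pn+t}+G_{2pk+t}\right).
\]
I would then collapse each summand using Lemma~\ref{thm.lg}. Setting $a=pq$ and $b=2pk+pq+t$ gives $b+a=2pk+2pq+t$ and $b-a=2pk+t$; since $p$ is even, $a=pq$ is even, so the \emph{even} case of Lemma~\ref{thm.lg} (the $+$ combination) applies and gives
\[
G_{2pk+2pq+t}+G_{2pk+t}=L_{pq}\,G_{2pk+pq+t},
\]
and likewise $G_{2pk+2pn+t}+G_{2pk+t}=L_{pn}\,G_{2pk+pn+t}$ because $pn$ is also even. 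Substituting these and pulling the constants $L_{pq}$ and $L_{pn}$ out of their respective sums delivers the claimed identity.

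I do not expect a genuine obstacle here; the entire content is parity bookkeeping, and the one point that needs care is that the two hypotheses pull in opposite-looking directions and must cooperate. Specifically, $nq$ odd is what makes $vw$ odd so that Lemma~\ref{FS2} is legal, while $p$ even is what forces $pq$ and $pn$ to be even and thereby selects the $+$ branch of Lemma~\ref{thm.lg}, which is exactly the branch matching the $f(uk+uv)+f(uk)$ shape of Lemma~\ref{FS2}. (Had one instead tried the odd branch, the signs would not line up.) A minor caveat is that Lemma~\ref{FS2} is stated for positive integers $u,v,w$, so the argument is really carried out for positive $p,q,n$, the remaining integer cases following from the standard extension of the generalized Fibonacci sequence to negative indices, as elsewhere in the paper.
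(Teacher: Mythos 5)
Your proof is correct and is essentially the paper's own argument: the paper likewise applies Lemma~\ref{FS2} to $f(k)=G_{2k+t}$ (to get the $nq$-odd telescoping identity) and then collapses both sides via the even branch of Lemma~\ref{thm.lg} using $p$ even. The only discrepancy is that the paper states the parameters as $u=2p$, $v=q$, $w=n$, which is inconsistent with $f(k)=G_{2k+t}$ (it would produce indices $4pk+\cdots$); your choice $u=p$ is the one that actually yields the paper's displayed equation, so you have in effect corrected a small slip in the paper's parameter bookkeeping.
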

\begin{proof}
Consider the sequence $f(k)=G_{2k+t}$. If we choose \mbox{$u=2p$}, $v=q$ and $w=n$, then Lemma~\ref{FS2} gives
\begin{equation}\label{equ.qyi2urw}
\begin{split}
&\sum_{k = 1}^{n} {(-1)^{k-1}\left( {G_{2pk + 2pq + t}+G_{2pk + t} } \right)}\\  
&\quad= \sum_{k = 1}^{q} {(-1)^{k-1}\left( {G_{2pk + 2pn + t}+G_{2pk + t} } \right)},\quad\mbox{$nq$ odd}\,.
\end{split}
\end{equation}
From the first identity in Lemma~\ref{thm.lg}, we have
\begin{equation}\label{equ.jro3cuv}
G_{2pk + 2pq + t}  + G_{2pk + t}  = L_{pq} G_{2pk + pq + t},\quad\mbox{$pq$ even}\,, 
\end{equation}
and
\begin{equation}\label{equ.i2dvppw}
G_{2pk + 2pn + t}  + G_{2pk + t}  = L_{pn} G_{2pk + pn + t},\quad\mbox{$pn$ even}\,.
\end{equation}
Using \eqref{equ.jro3cuv} and~\eqref{equ.i2dvppw} in~\eqref{equ.qyi2urw}, Theorem~\ref{T8} is proved. 
\end{proof}
\subsection{More sums involving products of reciprocals}\label{sec.more}
\begin{theorem}
If $p$, $q$, $n$ and $t$ are positive integers such that $pnq$ is odd, then
\begin{equation}\label{equ.ur0b9zu}
L_{pq} \sum_{k = 1}^{2n} {\frac{{( \pm 1)^{k - 1} G_{pk + pq + t} }}{{G_{pk + t} G_{pk + 2pq + t} }}}  = L_{pn} \sum_{k = 1}^{2q} {\frac{{( \pm 1)^{k - 1} G_{pk + pn + t} }}{{G_{pk + t} G_{pk + 2pn + t} }}}\,, 
\end{equation}
\begin{equation}\label{equ.bd2tciu}
L_{pq} \sum_{k = 1}^{n} {\frac{{ G_{2pk + pq + t} }}{{G_{2pk + t} G_{2pk + 2pq + t} }}}  = L_{pn} \sum_{k = 1}^{q} {\frac{{ G_{2pk + pn + t} }}{{G_{2pk + t} G_{2pk + 2pn + t} }}}\,.
\end{equation}
\end{theorem}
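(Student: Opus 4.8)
The two identities are the reciprocal-denominator analogues of the first-power sums of Theorem~\ref{T1}, and the plan is to reuse exactly the same machinery, but with the \emph{reciprocal} sequences in place of $f(k)=G_{k+t}$. Concretely, I would prove \eqref{equ.ur0b9zu} by feeding $f(k)=1/G_{k+t}$ into Lemma~\ref{FS1} and \eqref{equ.bd2tciu} by feeding $f(k)=1/G_{2k+t}$ into Lemma~\ref{FS}, and in each case convert the telescoped difference of two reciprocals into the summand of the required form by means of the odd-index case of Lemma~\ref{thm.lg}. Throughout I would use that $pnq$ odd forces both $pq$ and $pn$ to be odd.

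For \eqref{equ.ur0b9zu}, I would set $u=p$, $v=2q$, $w=2n$ (so the evenness hypotheses of Lemma~\ref{FS1} hold automatically) and apply Lemma~\ref{FS1} to $f(k)=1/G_{k+t}$, obtaining
\begin{equation*}
\sum_{k=1}^{2n}(\pm1)^{k-1}\left(\frac{1}{G_{pk+2pq+t}}-\frac{1}{G_{pk+t}}\right) = \sum_{k=1}^{2q}(\pm1)^{k-1}\left(\frac{1}{G_{pk+2pn+t}}-\frac{1}{G_{pk+t}}\right).
\end{equation*}
Placing each bracket over a common denominator and invoking the odd case of Lemma~\ref{thm.lg} with $a=pq$ (odd) and $b=pk+pq+t$ gives $G_{pk+2pq+t}-G_{pk+t}=L_{pq}G_{pk+pq+t}$, so the left-hand summand equals $-L_{pq}G_{pk+pq+t}/(G_{pk+t}G_{pk+2pq+t})$, and similarly on the right with $pn$ in place of $pq$. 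The common factor $-1$ cancels, yielding \eqref{equ.ur0b9zu}.

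For \eqref{equ.bd2tciu}, I would apply Lemma~\ref{FS} (which needs no parity condition and carries no alternating sign) to $f(k)=1/G_{2k+t}$ with $u=p$, $v=q$, $w=n$, giving
\begin{equation*}
\sum_{k=1}^{n}\left(\frac{1}{G_{2pk+2pq+t}}-\frac{1}{G_{2pk+t}}\right) = \sum_{k=1}^{q}\left(\frac{1}{G_{2pk+2pn+t}}-\frac{1}{G_{2pk+t}}\right).
\end{equation*}
The same step as before, now with $a=pq$ and $b=2pk+pq+t$, gives $G_{2pk+2pq+t}-G_{2pk+t}=L_{pq}G_{2pk+pq+t}$, and combining the reciprocals and cancelling the overall $-1$ delivers \eqref{equ.bd2tciu}.

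There is no genuine analytic difficulty here; the only thing to watch is the parity and sign bookkeeping. The hypothesis that $pnq$ is odd is used precisely to land in the subtractive branch $L_aG_b=G_{b+a}-G_{b-a}$ of Lemma~\ref{thm.lg}, which is exactly what turns the difference of the two generalized Fibonacci numbers in the combined numerator into a single Lucas-times-$G$ product with no leftover terms. I would double-check that the sign produced by writing $1/G_{\alpha}-1/G_{\beta}$ as $(G_\beta-G_\alpha)/(G_\alpha G_\beta)$ is the same on both sides, so that it cancels cleanly rather than flipping the identity.
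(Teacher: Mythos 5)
Your proposal is correct and matches the paper's proof exactly: the paper likewise feeds $f(k)=1/G_{k+t}$ into Lemma~\ref{FS1} with $u=p$, $v=2q$, $w=2n$ for \eqref{equ.ur0b9zu}, and $f(k)=1/G_{2k+t}$ into Lemma~\ref{FS} with $u=p$, $v=q$, $w=n$ for \eqref{equ.bd2tciu}, converting the telescoped reciprocal differences via the odd-index case of Lemma~\ref{thm.lg} (the identities \eqref{equ.cm8jui8} and \eqref{equ.c37r3b0}). Your explicit tracking of the overall $-1$ arising from $1/G_{\alpha}-1/G_{\beta}=(G_{\beta}-G_{\alpha})/(G_{\alpha}G_{\beta})$, which cancels from both sides, is a detail the paper leaves implicit.
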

\begin{proof}
Use of $f(k)=1/G_{k+t}$ in Lemma~\ref{FS1} with $u=p$, $v=2q$ and $w=2n$, noting the identites~\eqref{equ.cm8jui8} and~\eqref{equ.c37r3b0} proves identity~\eqref{equ.ur0b9zu}. To prove identity~\eqref{equ.bd2tciu}, we use $f(k)=1/G_{2k+t}$ in Lemma~\ref{FS} with $u=p$, $v=q$ and $w=n$, together with the second identity in Lemma~\ref{thm.lg}.
\end{proof}
\begin{theorem}
If $p$, $q$, $n$ and $t$ are positive integers such that $p$ is even and $nq$ is odd, then
\[
L_{pq} \sum_{k = 1}^{n} {\frac{{( - 1)^{k - 1} G_{2pk + pq + t} }}{{G_{2pk + t} G_{2pk + 2pq + t} }}}  = L_{pn} \sum_{k = 1}^{q} {\frac{{( - 1)^{k - 1} G_{2pk + pn + t} }}{{G_{2pk + t} G_{2pk + 2pn + t} }}}\,. 
\]
\end{theorem}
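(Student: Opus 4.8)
This identity is the alternating-sign, reciprocal companion of~\eqref{equ.bd2tciu}, bearing to it exactly the relation that Theorem~\ref{T8} bears to~\eqref{equ.r0qdvj9}; accordingly the plan is to mimic the proof of~\eqref{equ.bd2tciu}, replacing the plain telescoping lemma by its alternating-sign version. First I would take the sequence $f(k)=1/G_{2k+t}$ and apply Lemma~\ref{FS2} with $u=p$, $v=q$ and $w=n$. The hypothesis that $nq$ is odd is precisely the condition $vw$ odd demanded by that lemma, so it yields
\[
\sum_{k=1}^{n}(-1)^{k-1}\left(\frac{1}{G_{2pk+2pq+t}}+\frac{1}{G_{2pk+t}}\right)
=\sum_{k=1}^{q}(-1)^{k-1}\left(\frac{1}{G_{2pk+2pn+t}}+\frac{1}{G_{2pk+t}}\right).
\]

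Next I would place each bracketed expression over a common denominator, so that the left-hand summand becomes $\bigl(G_{2pk+t}+G_{2pk+2pq+t}\bigr)/\bigl(G_{2pk+t}\,G_{2pk+2pq+t}\bigr)$, and similarly on the right. Because $p$ is even, both $pq$ and $pn$ are even, so the first (even-index) identity of Lemma~\ref{thm.lg} applies; this is exactly the pair of identities~\eqref{equ.jro3cuv} and~\eqref{equ.i2dvppw} already recorded in the proof of Theorem~\ref{T8}, and they collapse the two numerators to $L_{pq}G_{2pk+pq+t}$ and $L_{pn}G_{2pk+pn+t}$ respectively. Factoring the constants $L_{pq}$ and $L_{pn}$ out of the two sums then produces the asserted equality.

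The whole argument is a short computation, so there is no serious obstacle; the one point that must be watched is the bookkeeping of parities. The alternating telescoping lemma forces $vw=nq$ to be odd, whereas the additive combination of reciprocals produced by Lemma~\ref{FS2} matches the Lucas form $G_{b+pq}+G_{b-pq}$ only in the even-index case of Lemma~\ref{thm.lg}, which is in turn guaranteed by $p$ being even. It is the simultaneous availability of these two facts---$nq$ odd for the summation lemma and $p$ even for the Lucas reduction---that makes the two hypotheses of the theorem exactly the right ones, and verifying that they interlock correctly is the only real content of the proof.
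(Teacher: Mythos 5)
Your proposal is correct and coincides with the paper's own proof: the paper likewise applies Lemma~\ref{FS2} with $f(k)=1/G_{2k+t}$, $u=p$, $v=q$, $w=n$, and then invokes the even-index identities~\eqref{equ.jro3cuv} and~\eqref{equ.i2dvppw} (the first case of Lemma~\ref{thm.lg} with $pq$, $pn$ even) to collapse the combined numerators. Your write-up simply makes explicit the common-denominator step and the parity bookkeeping that the paper leaves implicit.
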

\begin{proof}
Use $f(k)=1/G_{2k+t}$ in Lemma~\ref{FS2} with $u=p$, $v=q$ and $w=n$, employing the identities~\eqref{equ.jro3cuv} and~\eqref{equ.i2dvppw}.
\end{proof}
\begin{theorem}
If $p$, $q$, $n$ and $t$ are positive integers such that $p$ is even or $n$ and $q$ are even, then
\[
F_{pq} \sum_{k = 1}^n {\frac{{G_{2pk + pq + t} }}{{F_{pk} G_{pk + t} F_{pk + pq} G_{pk + pq + t} }}}  = F_{pn} \sum_{k = 1}^q {\frac{{G_{2pk + pn + t} }}{{F_{pk} G_{pk + t} F_{pk + pn} G_{pk + pn + t} }}}\,.
\]
\end{theorem}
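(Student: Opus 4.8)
The plan is to follow the telescoping strategy used throughout this section: choose a function $f(k)$ whose shifted difference reproduces the summand in the claimed identity, then read off the result from Lemma~\ref{FS}. Guided by the proof of Theorem~\ref{T5}, where $f(k)=F_kG_{k+t}$ was used, I would here take the reciprocal sequence $f(k)=1/(F_kG_{k+t})$ and apply Lemma~\ref{FS} with $u=p$, $v=q$ and $w=n$, giving
\[
\sum_{k=1}^n\left[f(pk+pq)-f(pk)\right]=\sum_{k=1}^q\left[f(pk+pn)-f(pk)\right].
\]

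The heart of the argument is the evaluation of $f(pk+pq)-f(pk)$. Placing the two terms over a common denominator yields
\[
f(pk+pq)-f(pk)=\frac{F_{pk}G_{pk+t}-F_{pk+pq}G_{pk+pq+t}}{F_{pk}G_{pk+t}\,F_{pk+pq}G_{pk+pq+t}},
\]
whose numerator is exactly the negative of the left-hand side of~\eqref{equ.vtbejuo}. That identity requires $pq$ even, and the companion~\eqref{equ.p2n0qq9} requires $pn$ even; this is where the hypothesis enters. If $p$ is even then both $pq$ and $pn$ are automatically even, while if $q$ and $n$ are both even the same holds regardless of the parity of $p$. In either case both forms of the first identity of Lemma~\ref{thm.fg} apply, so the numerator collapses to $-F_{pq}G_{2pk+pq+t}$, and the analogous computation on the right produces $-F_{pn}G_{2pk+pn+t}$.

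Substituting these back and pulling the constants $F_{pq}$ and $F_{pn}$ out of their respective sums turns the telescoped identity into
\[
-F_{pq}\sum_{k=1}^n\frac{G_{2pk+pq+t}}{F_{pk}G_{pk+t}\,F_{pk+pq}G_{pk+pq+t}}=-F_{pn}\sum_{k=1}^q\frac{G_{2pk+pn+t}}{F_{pk}G_{pk+t}\,F_{pk+pn}G_{pk+pn+t}},
\]
and cancelling the common factor $-1$ gives the theorem. I do not expect a genuine obstacle: the computation is mechanical once the telescoping function $f(k)=1/(F_kG_{k+t})$ is identified. The only subtle point is checking that the parity hypothesis simultaneously licenses \emph{both} specializations of Lemma~\ref{thm.fg}; here it is worth noting that by distributivity the stated condition ``$p$ even or ($n$ and $q$ even)'' is precisely equivalent to ``$pq$ even and $pn$ even,'' so the hypothesis is exactly what the two difference identities demand, neither stronger nor weaker.
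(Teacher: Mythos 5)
Your proposal is correct and matches the paper's proof essentially verbatim: both use $f(k)=1/(F_kG_{k+t})$ in Lemma~\ref{FS} with $u=p$, $v=q$, $w=n$, together with the reciprocal-difference identities obtained from~\eqref{equ.vtbejuo} and~\eqref{equ.p2n0qq9}. Your explicit observation that ``$p$ even or ($n$ and $q$ even)'' is exactly equivalent to ``$pq$ even and $pn$ even'' is a correct and welcome elaboration of a point the paper leaves implicit.
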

\begin{proof}
Use $f(k)=1/(F_kG_{k+t})$ in Lemma~\ref{FS} with \mbox{$u=p$}, $v=q$ and $w=n$, while taking cognisance of the following identities which follow from identities~\eqref{equ.vtbejuo} and~\eqref{equ.p2n0qq9}:
\begin{equation}
\frac{1}{{F_{pk} G_{pk + t} }} - \frac{1}{{F_{pk + pq} G_{pk + pq + t} }} = \frac{{F_{pq} G_{2pk + pq + t} }}{{F_{pk} G_{pk + t} F_{pk + pq} G_{pk + pq + t} }},\quad\mbox{$pq$ even}\,,
\end{equation}
and
\begin{equation}
\frac{1}{{F_{pk} G_{pk + t} }} - \frac{1}{{F_{pk + pn} G_{pk + pn + t} }} = \frac{{F_{pn} G_{2pk + pn + t} }}{{F_{pk} G_{pk + t} F_{pk + pn} G_{pk + pn + t} }},\quad\mbox{$pn$ even}\,.
\end{equation}

\end{proof}
\begin{theorem}
If $p$, $q$, $n$ and $t$ are positive integers such that $p$ is odd or $n$ and $q$ are even, then
\[
F_{pq} \sum_{k = 1}^n {\frac{(- 1)^{k-1}{G_{2pk + pq + t} }}{{F_{pk} G_{pk + t} F_{pk + pq} G_{pk + pq + t} }}}  = F_{pn} \sum_{k = 1}^q {\frac{(- 1)^{k-1}{G_{2pk + pn + t} }}{{F_{pk} G_{pk + t} F_{pk + pn} G_{pk + pn + t} }}}\,.
\]
\end{theorem}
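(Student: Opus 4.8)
The plan is to imitate the proof of the preceding theorem, the only new ingredient being the alternating weight $(-1)^{k-1}$, which forces us to replace the plain telescoping Lemma~\ref{FS} by its signed counterparts, Lemma~\ref{FS1} and Lemma~\ref{FS2}. As before I would work with the sequence $f(k)=1/(F_kG_{k+t})$, and the whole argument rests on two reciprocal identities extracted from Howard's corollary (Lemma~\ref{thm.fg}): the difference identity
\[
\frac{1}{F_{pk}G_{pk+t}}-\frac{1}{F_{pk+pq}G_{pk+pq+t}}=\frac{F_{pq}G_{2pk+pq+t}}{F_{pk}G_{pk+t}F_{pk+pq}G_{pk+pq+t}},\qquad pq\text{ even},
\]
already recorded in the previous theorem, together with its odd analogue obtained from identities~\eqref{equ.nzzx5ve} and~\eqref{equ.gn8wgtn},
\[
\frac{1}{F_{pk}G_{pk+t}}+\frac{1}{F_{pk+pq}G_{pk+pq+t}}=\frac{F_{pq}G_{2pk+pq+t}}{F_{pk}G_{pk+t}F_{pk+pq}G_{pk+pq+t}},\qquad pq\text{ odd},
\]
and the corresponding versions with $q$ replaced by $n$.

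I would then split according to which telescoping lemma is available. When $n$ and $q$ are even, $pq$ and $pn$ are even for every $p$, so the difference identity applies on both sides; feeding $f(k)=1/(F_kG_{k+t})$ into Lemma~\ref{FS1} with $u=p$, $v=q$, $w=n$ (legitimate precisely because $q$ and $n$ are even) and choosing the minus sign turns the telescoped differences $f(pk+pq)-f(pk)$ into $-F_{pq}G_{2pk+pq+t}/(\cdots)$; the two overall minus signs cancel and $F_{pq}$, $F_{pn}$ factor out, yielding the claimed identity. When instead $p$ is odd (so that we are in the all-odd situation in which $pq$, $pn$ and $qn$ are odd), the hypothesis $vw=qn$ odd of Lemma~\ref{FS2} is met; applying it to the same $f(k)$ with $u=p$, $v=q$, $w=n$ produces the telescoped sums $f(pk+pq)+f(pk)$, which the odd reciprocal identity converts directly into $F_{pq}G_{2pk+pq+t}/(\cdots)$, and the theorem follows after factoring $F_{pq}$ and $F_{pn}$.

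The only real subtlety, and the step I would watch most carefully, is the bookkeeping of parities: the telescoping lemma chosen and the reciprocal identity chosen must be compatible, since Lemma~\ref{FS1} demands $q,n$ even (which makes $pq,pn$ even, matching the difference identity) while Lemma~\ref{FS2} demands $qn$ odd (which, with $p$ odd, makes $pq,pn$ odd, matching the sum identity). Once the correct lemma--identity pairing is fixed, everything else is the routine sign-tracking already carried out in the proof of the previous theorem, so no genuinely new computation is required.
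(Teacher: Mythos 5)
The paper states this theorem without any proof (it is the only theorem in Section~\ref{sec.more} whose proof is omitted), so there is no official argument to compare against; your reconstruction with $f(k)=1/(F_kG_{k+t})$, the two reciprocal identities from Lemma~\ref{thm.fg}, and the signed telescoping lemmas is clearly the intended pattern, and both of your cases are executed correctly. However, there is a genuine gap in coverage. The hypothesis is the disjunction ``$p$ is odd \emph{or} $n$ and $q$ are even,'' and your parenthetical claim that $p$ odd puts us ``in the all-odd situation'' is false: $p$ odd says nothing about the parities of $q$ and $n$. When $p$ is odd and exactly one of $q$, $n$ is even, neither of your tools applies---Lemma~\ref{FS1} requires both $v=q$ and $w=n$ even, while Lemma~\ref{FS2} requires $vw=qn$ odd. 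The theorem is nevertheless true in that case (e.g.\ $G=F$, $t=1$, $p=1$, $q=2$, $n=1$ gives $5/6$ on both sides), so the case cannot simply be dropped; note that the analogous Theorem~\ref{TX} carries the hypothesis ``$pqn$ odd or $q$ and $n$ even,'' under which your two cases would indeed be exhaustive, but that is not what is asserted here.

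To close the mixed case you need a mixed-parity telescoping identity absent from the paper's list: for $v$ even and $w$ odd, the same index-shift argument that proves Lemmas~\ref{FS1} and~\ref{FS2} yields
\begin{equation*}
\sum_{k=1}^{w}(-1)^{k-1}\bigl(f(uk+uv)-f(uk)\bigr)=-\sum_{k=1}^{v}(-1)^{k-1}\bigl(f(uk+uw)+f(uk)\bigr)\,,
\end{equation*}
since the shift $k\mapsto k+v$ preserves $(-1)^{k-1}$ when $v$ is even, while $k\mapsto k+w$ reverses it when $w$ is odd, so both sides equal $\pm\left(\sum_{j=w+1}^{v+w}(-1)^{j-1}f(uj)-\sum_{j=1}^{v}(-1)^{j-1}f(uj)\right)$. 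Applying this with $u=p$, $v=q$ even, $w=n$ odd ($p$ odd), your difference identity (valid since $pq$ is even) turns the left side into $-F_{pq}\sum_{k=1}^{n}(-1)^{k-1}G_{2pk+pq+t}/\bigl(F_{pk}G_{pk+t}F_{pk+pq}G_{pk+pq+t}\bigr)$, and your sum identity (valid since $pn$ is odd) turns the right side into the matching expression with $-F_{pn}$; the two minus signs cancel, and the case $q$ odd, $n$ even follows by symmetry. With this third lemma added, your case analysis becomes exhaustive and the proof is complete as stated.
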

\subsection{Horadam sequence}\label{sec.horadam}
Some of the above results can be extended to the Horadam sequence~\cite{horadam}, $\{W_i\} = \{W_i(a,b; P,Q)\}$ defined by
\begin{equation}
W_0=a,W_1 = b,W_i = PW_{i-1}-QW_{i-2}\,,  (i>2)\,,
\end{equation}
where $a$, $b$, $P$, and $Q$ are integers, with $PQ\ne 0$ and $\Delta=P^2-4Q>0$. We define the sequence $\{U_i\}$ by $U_i =W_i(0,1; P,Q)$  and note also that our sequence $\{G_i\}$ is given by $G_i =W_i(G_0,G_1; 1,-1)$. It is readily established that~\cite{horadam,jeannin}:
\begin{equation}
W_i=\frac{a\alpha^i-b\beta^i}{\alpha-\beta}\,,
\end{equation}
where $\alpha=(P+\sqrt\Delta)/2$, $\beta=(P-\sqrt\Delta)/2$, $A=b-\beta a$ and $B=b-\alpha a$.
\begin{theorem}\label{thm.m3wxctm}
If $n$ and $q$ are nonnegative integers and $p$ is a nonzero integer, then
\[
U_{pq} \sum_{k = 1}^n {\frac{{Q^{pk} }}{{W_{pk} W_{pk + pq} }}}  = U_{pn} \sum_{k = 1}^q {\frac{{Q^{pk} }}{{W_{pk} W_{pk + pn} }}}\,.
\]
\end{theorem}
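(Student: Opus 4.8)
The plan is to mirror the proof of Theorem~\ref{thm.moyg61s}, with Lemma~\ref{thm.diff} replaced by its Horadam analogue. The crucial preliminary step is to establish, for all integers $a$ and $b$, the identity
\[
W_a U_b - W_b U_a = Q^a W_0\, U_{b-a}\,.
\]
I would prove this directly from the Binet forms $U_i=(\alpha^i-\beta^i)/(\alpha-\beta)$ and $W_i=(A\alpha^i-B\beta^i)/(\alpha-\beta)$. Expanding the left-hand side, the terms in $\alpha^{a+b}$ and $\beta^{a+b}$ cancel, leaving $(A-B)(\alpha^b\beta^a-\alpha^a\beta^b)/(\alpha-\beta)^2$. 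Using $A-B=(\alpha-\beta)W_0$ (from $A=b-\beta a$, $B=b-\alpha a$, recalling $W_0=a$) together with $\alpha^b\beta^a-\alpha^a\beta^b=(\alpha\beta)^a(\alpha^{b-a}-\beta^{b-a})=Q^a(\alpha-\beta)U_{b-a}$, everything collapses to $Q^aW_0U_{b-a}$. Note that when $P=1$, $Q=-1$ (so $U_i=F_i$, $W_i=G_i$, $W_0=G_0$) this reduces exactly to Lemma~\ref{thm.diff}.

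With this identity in hand, I would divide through by $W_aW_b$ and set $a=pk$, $b=pk+pq$ to obtain
\[
\frac{U_{pk+pq}}{W_{pk+pq}}-\frac{U_{pk}}{W_{pk}}=Q^{pk}W_0\,\frac{U_{pq}}{W_{pk}W_{pk+pq}}\,,
\]
and likewise, with $b=pk+pn$,
\[
\frac{U_{pk+pn}}{W_{pk+pn}}-\frac{U_{pk}}{W_{pk}}=Q^{pk}W_0\,\frac{U_{pn}}{W_{pk}W_{pk+pn}}\,.
\]
Applying Lemma~\ref{FS} to the sequence $f(k)=U_k/W_k$ with $u=p$, $v=q$, $w=n$ then turns the first identity (summed over $k=1,\dots,n$) into the second (summed over $k=1,\dots,q$). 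Pulling the constants $W_0U_{pq}$ and $W_0U_{pn}$ out of the respective sums and cancelling the common factor $W_0$ yields the theorem.

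I expect the main obstacle to be the first step, namely correctly deriving the Binet-form identity; but this is only a matter of careful bookkeeping with $\alpha\beta=Q$ and the definitions of $A$ and $B$, and once the constant $Q^aW_0$ is pinned down the remainder is identical in structure to the proof of Theorem~\ref{thm.moyg61s}. One minor point to watch is the degenerate case $W_0=0$, where the cancellation of $W_0$ is illegitimate; there $W_i=W_1U_i$ and the claimed equality can be verified directly (or recovered by continuity in the seed $a$).
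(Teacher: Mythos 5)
Your proof is correct, but it takes a genuinely different route from the paper's. The paper proves Theorem~\ref{thm.m3wxctm} by importing identity~(4.1) of Andr\'e-Jeannin~\cite{jeannin}, which after substituting $pk$ for $n$ and $pq$ for $k$ reads $\beta^{pk}/W_{pk}-\beta^{pk+pq}/W_{pk+pq}=AQ^{pk}U_{pq}/(W_{pk}W_{pk+pq})$, and then telescoping the sequence $f(k)=\beta^k/W_k$ via Lemma~\ref{FS} (with $u=p$, $v=q$, $w=n$), finally cancelling the constant $A=b-\beta a$. You instead prove from the Binet forms the self-contained Horadam analogue of Lemma~\ref{thm.diff}, namely $W_aU_b-W_bU_a=Q^aW_0U_{b-a}$ --- your computation checks out, since $A-B=(\alpha-\beta)W_0$ and $\alpha^b\beta^a-\alpha^a\beta^b=(\alpha\beta)^a(\alpha^{b-a}-\beta^{b-a})=Q^a(\alpha-\beta)U_{b-a}$ --- and then telescope the rational sequence $f(k)=U_k/W_k$, cancelling $W_0$. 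The telescoping skeleton is identical in both arguments; what differs is the potential function and the constant being cancelled. Your route buys self-containedness (no appeal to the external identity), works entirely with rational quantities rather than the irrational $\beta$, and specializes literally to Lemma~\ref{thm.diff} and the proof of Theorem~\ref{thm.moyg61s} at $(P,Q)=(1,-1)$; its cost is the degenerate case $W_0=0$, which notably includes the sequence $U$ itself, and which you correctly flag --- the continuity/polynomial-identity patch in the seed $a$ is legitimate, or one can argue directly by telescoping $f(k)=V_k/U_k$ with the companion sequence $V_i=\alpha^i+\beta^i$, for which $V_aU_b-U_aV_b=2Q^aU_{b-a}$. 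Conversely, the paper's normalization handles $W_0=0$ automatically (there $A=W_1\ne 0$) but silently assumes $A\ne 0$, which can fail when $\Delta$ is a perfect square (e.g.\ the constant sequence $W_i(1,1;3,2)$ has $A=0$, though the theorem still holds there); so each choice of constant has a blind spot, and yours is the only one of the two that is acknowledged and repaired.
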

Note that when $p=1$, Theorem~\ref{thm.m3wxctm} reduces to Theorem~1 of~\cite{jeannin}.
\begin{proof}
Since $n$ and $k$ in identity~(4.1) of~\cite{jeannin} are arbitrary nonnegative integers, we substitute $pk$ for $n$ and $pq$ for $k$ in the identity, obtaining
\begin{equation}\label{equ.u3hkcuo}
\frac{{\beta ^{pk} }}{{W_{pk} }} - \frac{{\beta ^{pk + pq} }}{{W_{pk + pq} }} = \frac{{AQ^{pk} U_{pq} }}{{W_{pk} W_{pk + pq} }}\,.
\end{equation}
The theorem now follows by choosing $f(k)=\beta^k/W_k$ in Lemma~\ref{FS} with $w=n$, $u=p$ and $v=q$ while making use of~\eqref{equ.u3hkcuo}.
\end{proof}
\begin{theorem}\label{thm.iplip0k}
If $n$ and $q$ are nonnegative \underline{even} integers and $p$ is a nonzero integer, then
\[
U_{pq} \sum_{k = 1}^{n} {\frac{{(\pm Q^p)^{k} }}{{W_{pk} W_{pk + pq} }}}  = U_{pn} \sum_{k = 1}^{q} {\frac{{(\pm Q^p)^{k} }}{{W_{pk} W_{pk + pn} }}}\,.
\]
\end{theorem}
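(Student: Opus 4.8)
The plan is to imitate the proof of Theorem~\ref{thm.m3wxctm} verbatim, but with the plain telescoping identity Lemma~\ref{FS} replaced by its alternating version Lemma~\ref{FS1}. Since $n$ and $q$ are assumed even, the parity hypotheses of Lemma~\ref{FS1} (that $v=q$ and $w=n$ be even) are met, so I would apply that lemma to the same sequence $f(k)=\beta^k/W_k$ with $u=p$, $v=q$ and $w=n$. This produces
\[
\sum_{k=1}^{n} (\pm 1)^{k-1}\left(\frac{\beta^{pk+pq}}{W_{pk+pq}}-\frac{\beta^{pk}}{W_{pk}}\right)
= \sum_{k=1}^{q} (\pm 1)^{k-1}\left(\frac{\beta^{pk+pn}}{W_{pk+pn}}-\frac{\beta^{pk}}{W_{pk}}\right).
\]
The degenerate cases $n=0$ or $q=0$ are trivial (an empty sum on one side forces both sides to vanish), so one may assume $n,q>0$.

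Next I would substitute the key relation~\eqref{equ.u3hkcuo} already established in the proof of Theorem~\ref{thm.m3wxctm}, which rearranges to $\beta^{pk+pq}/W_{pk+pq}-\beta^{pk}/W_{pk}=-AQ^{pk}U_{pq}/(W_{pk}W_{pk+pq})$, together with its analogue obtained by replacing $q$ with $n$ for the right-hand side. After this substitution the common nonzero factor $-A$ cancels from both sides, leaving
\[
U_{pq}\sum_{k=1}^{n}(\pm 1)^{k-1}\frac{Q^{pk}}{W_{pk}W_{pk+pq}}
= U_{pn}\sum_{k=1}^{q}(\pm 1)^{k-1}\frac{Q^{pk}}{W_{pk}W_{pk+pn}}.
\]

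Finally I would reconcile the sign bookkeeping with the stated form. For the upper sign, $(+1)^{k-1}Q^{pk}=(Q^p)^k=(+Q^p)^k$, exactly as claimed. For the lower sign, $(-1)^{k-1}Q^{pk}=-(-1)^{k}Q^{pk}=-(-Q^p)^k$, so the extra overall factor $-1$ appears on \emph{both} sides and cancels, again reproducing the asserted summand $(\pm Q^p)^k$. This conversion of $(\pm 1)^{k-1}Q^{pk}$ into $(\pm Q^p)^k$ is the only point requiring attention; every other step is a direct transcription of the argument for Theorem~\ref{thm.m3wxctm} with Lemma~\ref{FS1} substituted for Lemma~\ref{FS}, so I do not anticipate any serious obstacle.
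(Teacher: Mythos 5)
Your proposal is correct and is essentially the paper's own proof, which likewise applies Lemma~\ref{FS1} to $f(k)=\beta^k/W_k$ with $u=p$, $v=q$, $w=n$ and invokes the identity~\eqref{equ.u3hkcuo}. Your extra care over the cancellation of the factor $-A$ and the conversion of $(\pm 1)^{k-1}Q^{pk}$ into $(\pm Q^p)^k$ (with the overall $-1$ cancelling from both sides in the lower-sign case) is a correct elaboration of details the paper leaves implicit.
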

\begin{proof}
The theorem follows by choosing $f(k)=\beta^k/W_k$ in Lemma~\ref{FS1} with $w=n$, $u=p$ and $v=q$, while making use of~\eqref{equ.u3hkcuo}.
\end{proof}

\end{document}